\numberwithin{equation}{subsection}
\let\oldmarginpar\marginpar
\renewcommand\marginpar[1]{\-\oldmarginpar[\raggedleft\footnotesize #1]
{\raggedright\footnotesize #1}}
\newtheorem{theorem}{Theorem}[subsection]
\newtheorem{proposition}[theorem]{Proposition}
\newtheorem{corollary}[theorem]{Corollary}
\newtheorem{lemma}[theorem]{Lemma}
\theoremstyle{remark}
\newtheorem{remark}[theorem]{Remark}
\theoremstyle{definition}
\def\beq{\begin{eqnarray}}
\def\eeq{\end{eqnarray}}
\def\bes{\begin{eqnarray*}}
\def\ees{\end{eqnarray*}}
\DeclareMathOperator{\Hua}{Hua}
\DeclareMathOperator{\Li}{Li}
\DeclareMathOperator{\Stab}{Stab}
\DeclareMathOperator{\diag}{diag}
\def\C{\mathbb{C}}
\def\M{{\mathcal{M}}}
\def\z{\mathbf{z}}
\def\calH{\mathcal{H}}
\def\F{\mathbb{F}}
\def\Q{\mathbb{Q}}
\def\Z{\mathbb{Z}}
\newcommand{\nc}{\newcommand}
\nc{\op}[1]{\mathop{\mathchoice{\mbox{\rm #1}}{\mbox{\rm #1}}
{\mbox{\rm \scriptsize #1}}{\mbox{\rm \tiny #1}}}\nolimits}
\nc{\al}{\alpha}
\nc{\ep}{\varepsilon} \nc{\ga}{\gamma} \nc{\Ga}{\Gamma}
\nc{\la}{\lambda} \nc{\La}{\Lambda} \nc{\si}{\sigma}
\nc{\Sig}{{\Gamma}} \nc{\Om}{\Omega} \nc{\om}{\omega}
\nc{\SL}{{\rm SL}}
\nc{\GL}{{\rm GL}}
\nc{\PGL}{{\rm PGL}}
\nc{\G}{{\rm G}}
\nc{\Frob}{\op{ Frob}}
\nc{\Irr}{\op{Irr}}
\nc{\cpt}{{\op{cpt}}} \nc{\Dol}{{\op{Dol}}} \nc{\DR}{{\op{DR}}}
\nc{\B}{{\op{B}}} \nc{\Triv}{\op{Triv}} \nc{\Hod}{{\op{Hod}}}
\nc{\Log}{{\op{Log}}} \nc{\Exp}{{\op{Exp}}} \nc{\Est}{E_{\op{st}}}
\nc{\Hst}{H_{\op{st}}} \nc{\Left}[1]{\hbox{$\left#1\vbox to
   10.5pt{}\right.\nulldelimiterspace=0pt \mathsurround=0pt$}}
\nc{\Right}[1]{\hbox{$\left.\vbox to
   10.5pt{}\right#1\nulldelimiterspace=0pt \mathsurround=0pt$}}
\nc{\LEFT}[1]{\hbox{$\left#1\vbox to
   15.5pt{}\right.\nulldelimiterspace=0pt \mathsurround=0pt$}}
\nc{\RIGHT}[1]{\hbox{$\left.\vbox to
   15.5pt{}\right#1\nulldelimiterspace=0pt \mathsurround=0pt$}}
\nc{\bee}{{\bf E}} \nc{\bphi}{{\bf \Phi}}
\begin{document}

\title{A refinement of the $A$-polynomial of quivers} 
\author{Fernando  Rodriguez Villegas}

\maketitle

\section{Introduction}
For a finite quiver $Q=(Q_0,Q_1)$ Kac proved that the number of
absolutely irreducible representations of $Q$ of dimension vector
$n\in \Z_{\geq 0}^{Q_0}$ over $\F_q$ is given by a polynomial
$A_n(q)$. He conjectured that $A_n$ has non-negative integer
coefficients.

In this preliminary note we propose an refinement of this
polynomial. For simplicity we will concentrate on $S_g$, the quiver
consisting of one vertex and $g$ loops, though we expect many of the
results to extend to a general quiver. We only give a brief sketch of
proofs, fuller details will appear elsewhere.

We define  a priori rational functions $A_\lambda(q)$ indexed by
partitions $\lambda$ which give a decomposition
$$
A_n(q)=\sum_{|\lambda|=n}A_\lambda(q)
$$
of the $A$-polynomial of $S_g$.  (We drop $g$ from the notation if
there is no risk of confusion.)

Computations suggest that for $g>0$, which we assume from now on,
$A_\lambda(q)$ is in fact a polynomial in $q$ with non-negative
integer coefficients.  For example, for $g=2$ and $n=3$ we obtain
$$
A_{(1,1,1)}(q)=q^{10} + q^8 + q^7, \quad
A_{(2,1)}=q^6 + q^5, \quad
A_{(3)}=q^4
$$
with sum
$$
A_3(q)=q^{10} + q^8 + q^7 + q^6 + q^5 + q^4
$$
It would be quite interesting to understand this decomposition
directly in terms of absolutely indecomposable representations of the
quiver.

We sketch below a proof of the following formula for $A_\lambda(1)$
(the case $\lambda=(1^n)$ was previously proved by
Reineke~\cite{Reineke09} by different methods). By the conjectures of
\cite{HRV} the number $A_n(1)=\sum_{|\lambda|=n}A_\lambda(1)$ should
equal the dimension of the middle dimensional cohomology group of the
character variety $\M_n$ studied there. A refined version of this
conjecture states that $A_\lambda(1)$ is the number of connected
components of type $\lambda$ of a natural $\C^\times$ action on the
moduli space of Higgs bundles, which is diffeomorphic to $\M_n$. A
proof of this conjecture for $\lambda=(1^n)$ was recently given by
Reineke \cite[Theorem 7.1]{Reineke11}. The refined conjecture
originates in \cite[Remark 4.4.6]{HRV} and was in fact the motivation
to construct the truncated polynomials $A_\lambda(q)$ studied in this
note.

\begin{theorem}
\label{main}
For any non-zero partition $\lambda$ we have
\begin{equation}
\label{main-fmla}
A_\lambda(1)=\frac1\rho\sum_{d\mid m}\frac{\mu(d)}{d^2}
\frac1{P_1(m/d)P_N(m/d)}\prod_{i\geq  1}\binom{\rho P_i(m/d)-1+m_i/d}{m_i/d}
\end{equation}
where $\lambda=(1^{m_1}2^{m_2}\cdots N^{m_N})$ with $N=l(\lambda)$,
the length of $\lambda$, 
$$
P_i(m):=\sum_{j\geq 1}\min(i,j)\,m_j, \qquad m:=(m_1,m_2,\ldots), 
\quad \rho=:2g-2,
$$
and $\mu$ is the M\"obius function of number theory.
\end{theorem}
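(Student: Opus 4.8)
The plan is to read off $A_\lambda(1)$ from a plethystic-logarithm computation. I would take as the starting point the natural generating-series description of the refined polynomials: $A_\lambda(q)$ is the coefficient of the monomial $x^m:=\prod_i x_i^{m_i}$ (where $m=(m_1,m_2,\dots)$ records the multiplicities of the type $\lambda=(1^{m_1}2^{m_2}\cdots)$) in $\Log$ applied to the Hua-type groupoid generating series $\Omega(\{x_i\};q)=\sum_\lambda c_\lambda(q)\,x^m$ of $S_g$, with $c_\lambda(q)=q^{(g-1)\langle\lambda\rangle}/b_\lambda(q^{-1})$, $\langle\lambda\rangle=\sum_i m_iP_i(m)$ the centralizer dimension, and $b_\lambda(q^{-1})=\prod_i\prod_{k=1}^{m_i}(1-q^{-k})$. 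First I would expand $\Log=\sum_{d\ge1}\frac{\mu(d)}{d}\,\psi_d\circ\log$ in Adams operations $\psi_d$ (which raise all $x_i$ and $q$ to the $d$-th power). Since $\psi_d$ feeds the monomial $x^m$ only from $x^{m/d}$, its contribution is supported on $d\mid\gcd_i m_i$, which produces exactly the divisor sum $\sum_{d\mid m}$, the substitution $m\mapsto m/d$, and the Möbius weight $\mu(d)$ of \eqref{main-fmla}.

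Next I would specialise $q\to1$. A useful preliminary remark is that the rational prefactor in \eqref{main-fmla} is far less $d$-dependent than it looks: since $P_1(m)=\sum_j m_j=l(\lambda)$ and $P_N(m)=\sum_j j\,m_j=|\lambda|$ (taking $N$ to be the largest part), one has $P_1(m/d)P_N(m/d)=l(\lambda)|\lambda|/d^2$, so that $\frac1{d^2P_1(m/d)P_N(m/d)}=\frac1{l(\lambda)|\lambda|}$ is independent of $d$. The split into $\frac1{d^2}$ and $\frac1{P_1P_N}$ simply records two separate sources at the pre-limit level: the factor $d^{-2}$ comes from $\psi_d$ acting on two simple $q$-poles, $\psi_d(\frac1{q-1})=\frac1{q^d-1}\sim\frac1{d(q-1)}$, whereas $P_1(m/d)$ and $P_N(m/d)$ are the $q\to1$ degenerations of two $q$-deformed integers arising from $b_\lambda$. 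The overall $\frac1\rho$ with $\rho=2(g-1)$ then emerges from the limit of the $g$-loop exponent $q^{(g-1)\langle\lambda\rangle}=1+(g-1)\langle\lambda\rangle(q-1)+\cdots$.

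The combinatorial core is to show that the surviving coefficient is $\prod_{i\ge1}\binom{\rho P_i(m/d)-1+m_i/d}{m_i/d}$. Here $P_i(m)=\sum_j\min(i,j)m_j$ is precisely $\dim\Hom(J_i,N_\lambda)$, the space of homomorphisms from a single Jordan block $J_i$ of size $i$ into the nilpotent $N_\lambda=\bigoplus_j J_j^{\oplus m_j}$ of type $\lambda$; thus $\rho P_i$ is the relevant $S_g$-Euler datum attached to the part size $i$. Upon rescaling $x_i\mapsto(q-1)x_i$ to make $\lim_{q\to1}\Omega$ finite and expanding $\log$, each $q$-power $q^{\rho P_i(\cdots)}$ degenerates by a stars-and-bars count into the multiset binomial $\binom{\rho P_i-1+m_i}{m_i}=[t_i^{m_i}](1-t_i)^{-\rho P_i}$, and these assemble into the stated product.

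The step I expect to be the main obstacle is the $q\to1$ Laurent bookkeeping: the individual terms of $\log\Omega$ carry higher-order poles in $(q-1)$, and one must prove that these cancel and that the finite remainder collapses to the single binomial product with exactly the prefactor $\frac1{\rho\,l(\lambda)|\lambda|}$. I would control this either by a direct exponential-formula/Lagrange-inversion analysis of $\log\Omega$ at the level of leading Laurent coefficients, or, more structurally, by reinterpreting the divisor sum as the Möbius inversion that extracts the \emph{aperiodic} (primitive) part of a count of ``type $\le\lambda$'' configurations whose total is the product of binomials, which makes the cancellation automatic. As sanity checks I would confirm that the formula reduces to Reineke's established case $\lambda=(1^n)$ and reproduces the tabulated values $A_{(3)}(1)=1$, $A_{(2,1)}(1)=2$, $A_{(1,1,1)}(1)=3$ at $g=2$.
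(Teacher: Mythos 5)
Your outer reduction is correct, and it is exactly how the paper disposes of the same bookkeeping (in one sentence at the end of its proof): expanding $\Log=\sum_{d\geq1}\tfrac{\mu(d)}{d}\,\psi_d\circ\log$, noting that $\psi_d$ feeds $x^m$ only when $d\mid m$, and collecting one factor $1/d$ from the plethystic expansion and a second from $(q-1)/(q^d-1)\to 1/d$, reduces \eqref{main-fmla} to the single identity
\begin{equation*}
[x^m]\,V(x)=\frac{1}{\rho\,P_1(m)P_N(m)}\prod_{i\geq1}\binom{\rho P_i(m)-1+m_i}{m_i},
\qquad V(x):=\lim_{q\rightarrow1}\,(q-1)\log\Hua_N(x;q),
\end{equation*}
where $[x^m]$ denotes the coefficient of $x^m$ (your observation that the prefactor in \eqref{main-fmla} is $d$-independent, and your numerical checks, are also fine). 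But this identity carries essentially all the content of the theorem, and your proposal contains no valid argument for it. The one concrete mechanism you offer --- rescale $x_i\mapsto(q-1)x_i$ to make the limit finite, then let each power $q^{\rho P_i}$ degenerate by stars-and-bars into the multiset binomial --- cannot work. After that rescaling, the limit of $\Hua_N$ at fixed $g$ is simply $\exp(x_1+\cdots+x_N)$: the $q$-powers tend to $1$ and every trace of $g$ disappears (that rescaled limit is only informative when $g\to\infty$ simultaneously, which is the paper's Remark~\ref{trees} and concerns only the leading term in $\rho$). Moreover, no term-by-term expansion of $q$-powers can produce binomial coefficients whose ``alphabet size'' $\rho P_i(m)$ depends on the exponent $m$ of the very monomial being extracted, nor the global factor $1/(P_1(m)P_N(m))$; such self-referential coefficients are the signature of an implicitly defined algebraic function, exactly as the $\binom{an}{n}$-type coefficients in \S\ref{order-1} arise from the trinomial equation, not from expanding any series term by term.

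What the paper supplies at this point, and what is absent from your sketch, is the chain: (i) the saddle-point analysis of the $q\to1$ asymptotics, packaged as Schl\"afli's differential $dV=\sum_i\log z_i\,dx_i/x_i$ (Proposition~\ref{schlaefli}), where the $z_i(x)$ solve the algebraic system \eqref{saddle-pts-eqn}, $1-z_i=x_i\prod_j z_j^{-\rho\min(i,j)}$; this converts the needed coefficient into $[x^m]V=\tfrac{1}{m_i}[x^m]\log z_i$ for any $i$ with $m_i\neq0$; (ii) Stieltjes' multivariate Lagrange inversion (Theorem~\ref{lagrange}) combined with the generalized matrix-tree theorem, which evaluates $[x^m]\log z_i$ as the binomial product times $m_i\tau(m)/\prod_j P_j(m)$, with $\tau(m)$ the weighted spanning-tree sum of \eqref{partR} --- incidentally, the overall $1/\rho$ arises here, because a tree has $N-1$ edges against $N$ vertices, and not from expanding $q^{(g-1)\langle\lambda,\lambda\rangle}$ as you suggest (that expansion would produce a factor $\rho$, not $1/\rho$); and (iii) Lemma~\ref{tau}, the special property of the tadpole matrix $\calH_N$ that $\tau(m)=P_2(m)\cdots P_{N-1}(m)$, which is what collapses the prefactor to $1/(P_1(m)P_N(m))$ and is false for a general symmetric matrix. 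You do name ``Lagrange inversion'' as one of two fallback options, but with no saddle-point system, no link between $V$ and its solutions, and no determinant/tree computation, it is a pointer rather than a proof; your other fallback (M\"obius inversion extracting the ``aperiodic part'' of an unspecified count) is speculation. In short: the easy outer layer is right, but the core identity --- the actual theorem --- is left unproved.
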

Note that if the entries of $m$ have no
common factor then the right hand side of~\eqref{main-fmla} consists
of only one term.

\begin{corollary}
  As a function of $g$, the quantity $A_\lambda(1)$ is a polynomial of
  degree $l(\lambda)-1$; its leading coefficient in $\rho:=2g-2$ is
\begin{equation}
\label{leading-term}
\frac 1{P_1(m)P_{l(\lambda)}(m)}\prod_{i\geq 1} \frac{P_i(m)^{m_i}}{m_i!}.
\end{equation}
\end{corollary}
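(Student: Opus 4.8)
The plan is to read off both claims directly from the explicit formula~\eqref{main-fmla}, regarding $A_\lambda(1)$ as a polynomial in $\rho=2g-2$ and analyzing the contribution of each divisor $d\mid m$ separately. First I would fix $d$ and examine a single factor, writing
$$
\binom{\rho P_i(m/d)-1+m_i/d}{m_i/d}=\frac{1}{(m_i/d)!}\prod_{j=0}^{m_i/d-1}\bigl(\rho P_i(m/d)+j\bigr),
$$
which exhibits it as a polynomial in $\rho$ of degree exactly $m_i/d$ with leading coefficient $P_i(m/d)^{m_i/d}/(m_i/d)!$, provided $P_i(m/d)\neq 0$. Positivity of $P_i(m/d)=\sum_j\min(i,j)\,m_j/d$ is immediate, since $\lambda$ is non-zero forces some $m_j>0$ and every term of the sum is then non-negative with at least one positive. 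Hence the $i$-product $T_d(\rho):=\prod_{i\geq 1}\binom{\rho P_i(m/d)-1+m_i/d}{m_i/d}$ is a polynomial in $\rho$ of degree $\sum_i m_i/d=l(\lambda)/d$, with leading coefficient $\prod_i P_i(m/d)^{m_i/d}/(m_i/d)!$.

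Next I would dispose of the overall factor $1/\rho$. Evaluating $T_d$ at $\rho=0$, any index $i$ with $m_i>0$ contributes the vanishing binomial $\binom{m_i/d-1}{m_i/d}=0$, and such an $i$ exists because $\lambda\neq 0$; therefore $T_d(0)=0$, so $T_d(\rho)/\rho$ is again a polynomial, now of degree $l(\lambda)/d-1$ and with the same leading coefficient. Consequently
$$
A_\lambda(1)=\sum_{d\mid m}\frac{\mu(d)}{d^2\,P_1(m/d)P_N(m/d)}\;\frac{T_d(\rho)}{\rho}
$$
is genuinely a polynomial in $\rho$, and hence in $g$.

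Finally I would compare degrees across $d$. Each summand has degree $l(\lambda)/d-1$, which is strictly largest at $d=1$, since for $d\geq 2$ one has $l(\lambda)/d-1\leq l(\lambda)/2-1<l(\lambda)-1$; thus no cancellation can occur at top degree and $\deg_\rho A_\lambda(1)=l(\lambda)-1$. The leading coefficient is then read off from the $d=1$ term alone: with $\mu(1)=1$ it equals $\frac{1}{P_1(m)P_N(m)}$ times the leading coefficient of $T_1$, giving $\frac{1}{P_1(m)P_{l(\lambda)}(m)}\prod_{i\geq 1}P_i(m)^{m_i}/m_i!$, which is exactly~\eqref{leading-term} after substituting $N=l(\lambda)$.

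There is no serious obstacle here; the argument is an elementary degree analysis of products of binomial coefficients. The only points requiring care are the two positivity/vanishing observations---that $P_i(m/d)>0$ guarantees each binomial has the expected degree and non-vanishing leading coefficient, and that $T_d(0)=0$ so that the prefactor $1/\rho$ does not spoil polynomiality---together with the remark that the divisors $d\geq 2$ contribute in strictly lower degree and therefore affect neither the degree nor the leading coefficient.
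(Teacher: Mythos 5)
Your proof is correct and is essentially the argument the paper intends: the Corollary is stated as an immediate consequence of Theorem~\ref{main} with no separate proof, and the intended derivation is exactly your degree analysis of formula~\eqref{main-fmla}, in which only the $d=1$ term can contribute to the top degree. The two points you isolate---that $P_i(m/d)>0$ makes each binomial factor have full degree $m_i/d$ with positive leading coefficient, and that $T_d(0)=0$ so the prefactor $1/\rho$ preserves polynomiality---are precisely the non-trivial checks, and you handle both correctly.
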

\begin{remark}
  In particular, we recover the fact (noticed numerically in
  \cite{HRV} and proved in greater generality in \cite{HeRV}) that
  $A_n(1)$ is a polynomial in $\rho$ of degree $n-1$ and leading
  coefficient $n^{n-2}/n!$. The appearance of the term $n^{n-2}$, the
  number of spanning trees on $n$ labelled vertices, is not a
  coincidence, see Remark~\ref{trees}.
\end{remark}

We also note the following important property (here we write
$A_\lambda^\rho$ with $\rho=2g-2$ for $A_\lambda$ to indicate the
dependence on $g$), which was inspired by the interpretation of
$A_\lambda(1)$ in terms of the moduli space of Higgs bundles mentioned
above.
\begin{proposition}
\label{scaling}
  Let $n$ be a positive integer and
  $\lambda=(\lambda_1,\lambda_2,\ldots)$ a non-zero partition. Define
  $n\lambda:=(n\lambda_1,n\lambda_2,\ldots)$. Then
$$
A_{n\lambda}^\rho(q)=A_\lambda^{n\rho}(q), \qquad \qquad \rho:=2g-2.
$$
In particular,
$$
A_{(n)}(q)=q^{n(g-1)+1}
$$
\end{proposition}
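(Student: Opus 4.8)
The plan is to read the scaling relation off the definition of $A_\lambda(q)$ and then to obtain $A_{(n)}(q)$ by specialising to $\lambda=(1)$. Recall that $A_\lambda(q)$ is defined, up to the common normalisation by $q-1$, as the coefficient of $\prod_i x_i^{m_i}$ (with $\lambda=(1^{m_1}2^{m_2}\cdots)$ and $m=(m_1,m_2,\dots)$) in the plethystic logarithm $\Log F^\rho$ of an explicit generating series $F^\rho=\sum_\lambda C^\rho_\lambda(q)\prod_i x_i^{m_i}$. In each coefficient $C^\rho_\lambda(q)$ the parameter $\rho=2g-2$ enters only through the quadratic form $\langle\lambda,\lambda\rangle:=\sum_{i,j}\min(i,j)\,m_im_j$, which governs the power of $q$, and through the linear forms $P_i(m)$, which govern the (Gaussian) binomial factors. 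The proof then splits into two parts: (1) a homogeneity identity $C^\rho_{n\lambda}(q)=C^{n\rho}_\lambda(q)$ for the coefficients themselves, and (2) a transfer of this identity through $\Log$. I note in passing that at $q=1$ the whole statement is already a term-by-term consequence of the formula of Theorem~\ref{main}, so the real work lies in upgrading this to all $q$.

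For part (1), write $n\lambda=(1^{m'_1}2^{m'_2}\cdots)$; then $m'_{nk}=m_k$ and $m'_j=0$ for $n\nmid j$, so $n\lambda$ is supported on the parts divisible by $n$, carries exactly the multiplicities of $\lambda$, and has the same set of common divisors $d\mid m$. Both forms are homogeneous of degree one under dilation,
\begin{equation*}
P_{nk}(m')=\sum_l\min(nk,nl)\,m_l=n\,P_k(m),\qquad
\langle n\lambda,n\lambda\rangle=\sum_{i,j}\min(ni,nj)\,m_im_j=n\,\langle\lambda,\lambda\rangle,
\end{equation*}
while the largest part $N$ becomes $nN$, so $P_1(m')=P_1(m)$ whereas the factor $P_N$ scales to $n\,P_N(m)$. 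Hence every binomial colour $\rho P_{nk}(m')=(n\rho)P_k(m)$ and every $\rho$-dependent exponent $\tfrac{\rho}{2}\langle n\lambda,n\lambda\rangle=\tfrac{n\rho}{2}\langle\lambda,\lambda\rangle$ coincide with the corresponding quantities for $\lambda$ at parameter $n\rho$. It is essential here that $\rho$ enters the $q$-weight as $\tfrac{\rho}{2}=g-1$ times the degree-one form $\langle\cdot,\cdot\rangle$ rather than as $g$; the apparent surplus $q^{g\langle n\lambda,n\lambda\rangle}$ is absorbed by the automorphism denominators, which scale as $|\Aut M_{n\lambda}|=q^{(n-1)\langle\lambda,\lambda\rangle}\,|\Aut M_\lambda|$ because dilation multiplies the $q$-part $q^{\langle\lambda,\lambda\rangle}$ of $|\Aut M_\lambda|$ by $n$ while leaving the multiplicity-dependent factors untouched. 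Carrying this bookkeeping out for all $q$ should yield $C^\rho_{n\lambda}(q)=C^{n\rho}_\lambda(q)$.

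For part (2) the key observation is that $n\lambda$ has vanishing multiplicity at every part not divisible by $n$; since multiplicities add with non-negative values, every decomposition $\nu^{(1)}+\cdots+\nu^{(k)}=n\lambda$ arising in $\log F=\sum_{k\geq1}\frac{(-1)^{k-1}}{k}(F-1)^k$ has each $\nu^{(j)}$ supported on parts divisible by $n$, i.e. $\nu^{(j)}=n\kappa^{(j)}$. Thus the coefficient of $\prod_i x_{ni}^{m_i}$ in $\log F^\rho$ involves only the $C^\rho_{n\kappa}$; applying part (1) and the fact that the monomial substitution $\iota_n\colon x_k\mapsto x_{nk}$ commutes with the ordinary logarithm identifies it with the coefficient of $\prod_i x_i^{m_i}$ in $\log F^{n\rho}$. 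The full plethystic logarithm $\Log=\sum_{d\geq1}\tfrac{\mu(d)}{d}\,\psi_d$, with $\psi_d\colon x_i\mapsto x_i^d,\ q\mapsto q^d$, then contributes at $\prod_i x_{ni}^{m_i}$ exactly the terms with $d\mid\gcd(m)$; each such term, indexed by $m/d$ and evaluated at $q^d$, matches the corresponding term of $\Log F^{n\rho}$ at $\prod_i x_i^{m_i}$ by the displayed identification. Summing over $d$ gives $A^\rho_{n\lambda}(q)=A^{n\rho}_\lambda(q)$. I expect the main obstacle to be part (1): the $q=1$ identity is a transparent comparison of the explicit formula, but verifying that the full $q$-weights and the Gaussian binomials themselves—together with the precise cancellation against the automorphism factors—transform exactly under $\lambda\mapsto n\lambda$ is where the delicate computation sits. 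Part (2), by contrast, is essentially formal once the support observation is in place.

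Finally, the closed form for $A_{(n)}(q)$ follows by taking $\lambda=(1)$, the unique partition of $1$: the scaling relation gives $A^\rho_{(n)}(q)=A^{n\rho}_{(1)}(q)$. Since $(1)$ is the only partition of $1$ we have $A_{(1)}(q)=A_1(q)$, the Kac polynomial of $S_g$ in dimension $1$; as every one-dimensional representation of $S_g$ is a point of $\F_q^{\,g}$ on which $\GL_1$ acts trivially and which is automatically absolutely indecomposable, $A_1(q)=q^{g}=q^{\rho/2+1}$. Substituting $\rho\mapsto n\rho$ yields $A^{n\rho}_{(1)}(q)=q^{\,n\rho/2+1}=q^{\,n(g-1)+1}$, as claimed.
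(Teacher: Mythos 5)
Your proof is correct and follows essentially the same route as the paper: you observe that $n\lambda$ has the same nonzero multiplicities as $\lambda$ (so the denominator factors $(q)_m$ are unchanged, which you phrase via automorphism orders) while the quadratic form $\langle\cdot,\cdot\rangle$ scales by $n$, converting the parameter $\rho$ into $n\rho$, and you then get $A_{(n)}(q)$ from $A_1(q)=q^g$. The only difference is that you spell out the transfer of this coefficient identity through the plethystic $\Log$ (the support/divisibility argument), a step the paper compresses into ``the first claim follows from the definition of $A_\lambda$.''
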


\paragraph{Acknowledgements} The work was started while the author was
visiting Oxford University supported by an FRA from the University
of Texas at Austin, EPSRC grant EP/G027110/1 and Visiting Fellowships
at All Souls and Wadham Colleges in Oxford.  The note was finished
while the author attended the workshop {\it Representation Theory of
  Quivers and Finite Dimensional Algebras} in February 2011 at MFO,
Oberwolfach. He would like to thank the organizers for the invitation
to participate as well as MFO for the hospitality. He would also like
to thank the participants of the workshop T. Hausel, E. Letellier,
S. Mozgovoy and M. Reineke for many stimulating conversations. The
author's research is supported by the NSF grant DMS-0200605 and a
Research Scholarship from the Clay Mathematical Institute.

\section{Definition of $A_\lambda(q)$}
Hua's formula expresses the $A$-polynomial of a quiver in terms of
generating series.  Let
\begin{equation}
\label{Hua-defn}
\Hua(T;q):=\sum_{\lambda} \frac{q^{(g-1)\langle \lambda, \lambda
    \rangle}}{b_\lambda(q^{-1})} \, T^{|\lambda|}.
\end{equation}
Then for $S_g$ we have
\begin{equation}
\label{Hua-fmla}
(q-1)\,\Log\left(\Hua(T;q)\right)=\sum_{n\geq 0} A_n(q)T^n.
\end{equation}

Here $\Log$ is the plethystic equivalent of the usual $\log$ (see for
example \cite{HRV} for a discussion). The main use of $\Log$ is as a
convenient tool to manipulate the conversion of series to infinite
products. It takes a factor of the form $(1-w)^{-1}$, where $w$ is a
monomial in some set of variables, to $w$.

We are interested in the values $A_n(1)$. Because the individual
terms in the generating function $\Hua(T;q)$ have high order poles at
$q=1$ it is not easy to recover these numbers from
\eqref{Hua-fmla}.

Consider Hua's formula as the limit as $N\rightarrow \infty$ of the
truncated series
\begin{equation}
\label{Hua_N-defn}
\Hua_N(T;q):=\sum_{\lambda_1\leq N} \frac{q^{(g-1)\langle \lambda, \lambda
    \rangle}}{b_\lambda(q^{-1})} \, T^{|\lambda|}.
\end{equation}
Using multiplicities $m=(m_1,\ldots,m_N)$ to parametrize partitions
we can write this series as the specialization $x_i=T^i$ for
$i=1,2,\ldots,N$ of the following series in the variables
$x:=(x_1,x_2,\ldots,x_N)$ and
$$
\Hua_N(x;q)
:=\sum_m \frac{q^{(g-1){}^tm\calH_Nm}} 
{(q^{-1})_m}\,x^m
$$
where 
$$
x^m:=x_1^{m_1}x_2^{m_2}\cdots x_N^{m_N}\qquad
(q)_m:= \prod_{i=1}^N(q)_{m_i}\qquad
\calH_N:=\left(\min(i,j)\right),\qquad i,j=1,2,\ldots,N.
$$
 It is remarkable that the
inverse of $\calH_N$ has a very simple and sparse structure, namely,
$$
\calH_N^{-1}=
\left(
\begin{matrix}
2&-1 &0 &&\\
-1& 2&-1&&\\
0&-1&2&-1 &\\
&&\vdots& &\\
& &-1&2&-1\\
& &0&-1&1
\end{matrix}
\right).
$$
This is the Cartan matrix of the {\it tadpole} $T_N$ (obtained by
folding the $A_{2N}$ diagram in the middle), a positive definite
symmetric matrix of determinant~$1$.

We define refinements of the $A$-polynomial by replacing $\Hua(T;q)$
by $\Hua_N(x;q)$. More precisely, define $A_\lambda(q)$ for $\lambda$
a partition with $l(\lambda)\leq N$ as $A_m(q)$, where 
\begin{equation}
\label{A-refined-defn}
(q-1)\,\Log\left(\Hua_N(x;q)\right)=\sum_m A_m(q)\,x^m,
\end{equation}
and $m=(m_1,\ldots,m_N)$ are the multiplicities of $\lambda$, so
$\lambda=(1^{m_1}2^{m_2}\cdots N^{m_N})$. It is straightforward to
check that the definition is independent of $N$ as long as
$l(\lambda)\leq N$

As mentioned, $A_\lambda(q)$ is a priori a rational function of $q$
but we expect it to actually be a polynomial.
We have in  any case
$$
A_n=\sum_{|\lambda|=n} A_\lambda.
$$

\begin{remark}
\label{trees}
Appropriately scaled, $\Hua_N(x;q)$ converges as $g\rightarrow \infty$
to
$$
G(x;q):=\sum_mq^{{}^tm\calH_Nm}\,\frac{x^m}{m!}, \qquad \qquad m!:=m_1!\cdots
m_N!.
$$
By the exponential formula of combinatorics 
$$
\left.(q-1)\log G(x;q)\right|_{q=1}
$$
is the exponential generating function for certain weighted
trees. This gives an interpretation of the leading term of
$A_\lambda(1)$ as $g\rightarrow \infty$ in terms of weighted trees in
the spirit of~\cite{HeRV}. See Remark~\ref{polya-remark}.
\end{remark}

\begin{proof}[Proof of Proposition~\ref{scaling}]
Let $m=(m_1,\ldots,m_N)$ be the multiplicities of $\lambda$. Then the
multiplicities of $n\lambda$ are
$$
m[n]:=(0,\ldots,m_1,0,\dots,m_2,0,\ldots,m_N),
$$
where $m_i$ is located at the spot $n i$. Note that $m$ and $m[n]$
have exactly the same non-zero entries. Hence $(q)_{m[n]}=(q)_m$. Also,
  it is easy to check that
$$
{}^tm[n]\calH_{nN}m[n]=n\,{}^tm\calH_Nm.
$$
Now the first claim follows from the definition of $A_\lambda$.  
The second claim follows from the first since $A_1(q)=q^g$.
\end{proof}

\section{Truncation to order $N=1$}
\label{order-1}

We will first consider the case $N=1$ in detail as it contains  all
of the main ingredients of the general case.  When $N=1$ we can
interpret $\Hua_N$ as follows. Consider the action by conjugation of
$\GL_n(\F_q)$ on $g$-tuples of $n\times n$ matrices
$X:=(X_1,\ldots,X_n)$ with coefficients in $\F_q$. The number $N_n(q)$
of such $g$-tuples, each weighed by $1/|\Stab(X)|$, where $\Stab(X)$
is the stabilizer of $X$ in $\GL_n(\F_q)$, is
$$
N_n(q)=\frac{q^{gn^2}}{|\GL_n(\F_q)|}=\frac{q^{(g-1)n^2}}{(q^{-1})_n},
$$
since
$$
|\GL_n(\F_q)|=(-1)^nq^{\tfrac12n(n-1)}(q)_n=q^{n^2}(q^{-1})_n
$$
Hence
$$
\Hua_1(x_1;q)=\sum_{n\geq 0} N_n(q)\,x_1^n.
$$

We want to get an expression for $A_{(1^n)}(1)$ and we do this by
studying the asymptotics of $\log \Hua(x;q)$ as $q\rightarrow 1$.  The
general question of describing the asymptotics of series like \eqref
{Hua-defn} as $q$ approaches $1$ has a long history. Already Ramanujan
used such an asymptotics to test his famous $q$-series identities now
known as the Rogers--Ramanujan formulas. More recently, the question
arose in conformal field theory and there is a beautiful conjecture of
Nahm~\cite{Nahm} that relates the modular behaviour of $q$-series of
this type with torsion elements in the Bloch group.

To conform with the standard format in the literature we will change
$q$ to $q^{-1}$.  The basic result is the following
(see~\cite{Mcintosh},\cite{Zagier}).
\begin{proposition}
\label{asymptotics}
  Let $a$ and $T$ be  fixed positive real numbers and let $z$ be the
  positive real root of
\begin{equation}
\label{trinomial-eqn}
Tz^a+z-1=0.
\end{equation}
Then with $q=e^{-t}$ and $t\searrow 0$ we have
$$
\log \sum_{n \geq 0}\frac{q^{\tfrac12
    an^2}}{(q)_n}\,T^n=c_{-1}t^{-1}+c_0+c_1t+\cdots,
$$
where
$$
c_{-1}=\Li_2(1-z)+\tfrac12 a\log^2 z, \qquad \qquad 
c_0=-\tfrac12 \log(z+a(1-z))
$$

\end{proposition}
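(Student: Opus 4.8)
The plan is to analyze the sum by the Laplace (saddle-point) method, after first converting the $q$-Pochhammer denominator into a continuous object via Euler--Maclaurin summation. Write $q=e^{-t}$ and let $\Phi_n$ be the logarithm of the $n$-th summand, so that
$$
\Phi_n=-\tfrac12 an^2 t+n\log T-\log(q)_n,\qquad (q)_n=\prod_{k=1}^n(1-e^{-kt}).
$$
The terms that dominate as $t\searrow 0$ are those with $n$ of order $1/t$, so I would introduce the scaled variable $\xi=nt$ and seek an expansion of $\Phi_n$ in powers of $t$ at fixed $\xi$.

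First I would establish a two-term asymptotic for the Pochhammer symbol. Writing $\log(1-e^{-u})=\log u+\log\frac{1-e^{-u}}{u}$ isolates the singular endpoint at $u=0$: the first piece is summed by Stirling's formula, $\sum_{k=1}^n\log(kt)=\log(n!)+n\log t$, while the second, now smooth at the origin, is handled by ordinary Euler--Maclaurin. Using $\frac{d}{d\xi}\Li_2(e^{-\xi})=\log(1-e^{-\xi})$ to integrate, this yields
$$
\log\frac1{(q)_n}=\frac1t\Bigl(\tfrac{\pi^2}6-\Li_2(e^{-\xi})\Bigr)+\tfrac12\log t-\tfrac12\log\bigl(2\pi(1-e^{-\xi})\bigr)+O(t).
$$
Substituting into $\Phi_n$ gives $\Phi_n=\frac1t\phi(\xi)+\phi_0(\xi)+O(t)$ with $\phi(\xi)=-\tfrac12 a\xi^2+\xi\log T+\tfrac{\pi^2}6-\Li_2(e^{-\xi})$ and $\phi_0(\xi)=\tfrac12\log t-\tfrac12\log\bigl(2\pi(1-e^{-\xi})\bigr)$.

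Next I would locate the saddle. The equation $\phi'(\xi)=-a\xi+\log T-\log(1-e^{-\xi})=0$ becomes, after setting $z=e^{-\xi}$, exactly $T=z^{-a}(1-z)$, i.e. the trinomial equation $Tz^a+z-1=0$; since $Tz^a+z-1$ increases from $-1$ to $T$ on $[0,1]$ it has a unique root $z\in(0,1)$, so the saddle exists and is nondegenerate, with $-\phi''(\xi^\ast)=\bigl(z+a(1-z)\bigr)/(1-z)>0$. Laplace's method applied to $\sum_n e^{\Phi_n}$ then produces $\log F=\frac1t\phi(\xi^\ast)+\phi_0(\xi^\ast)+\tfrac12\log\frac{2\pi}{-t\,\phi''(\xi^\ast)}+O(t)$. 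The leading coefficient is $c_{-1}=\phi(\xi^\ast)$; evaluating it with $\log T=a\xi^\ast+\log(1-z)$ and $\xi^\ast=-\log z$, then applying Euler's reflection identity $\Li_2(z)+\Li_2(1-z)=\tfrac{\pi^2}6-\log z\,\log(1-z)$, collapses everything to $c_{-1}=\Li_2(1-z)+\tfrac12 a\log^2 z$. For the constant term the spurious $\tfrac12\log t$ in $\phi_0$ cancels the $-\tfrac12\log t$ coming from the Gaussian factor, and the surviving pieces combine to $c_0=-\tfrac12\log\bigl(z+a(1-z)\bigr)$.

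The main obstacle is the rigorous justification of the Laplace method for the sum, as opposed to its heuristic application: one must bound the contribution of $n$ away from $n^\ast=\xi^\ast/t$, confirm that the Euler--Maclaurin remainder and the higher Taylor terms at the saddle enter only at order $t$ (thereby feeding $c_1t+\cdots$ without disturbing $c_{-1}$ and $c_0$), and treat the $u\to0$ endpoint uniformly so that the Stirling and Euler--Maclaurin pieces glue correctly. Once these estimates are in place the bookkeeping above is routine, and the cancellation of the $\log t$ terms is the structural reason the expansion begins at $t^{-1}$ with no half-integer powers or logarithms.
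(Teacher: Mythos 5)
Your proposal is correct: the saddle-point equation $-a\xi+\log T-\log(1-e^{-\xi})=0$ with $z=e^{-\xi}$ is exactly the trinomial equation, the strict concavity of $\phi$ makes the saddle the unique global maximum, Euler's reflection formula collapses $\phi(\xi^\ast)$ to $\Li_2(1-z)+\tfrac12 a\log^2 z$, and the cancellation of the two $\tfrac12\log t$ terms (Stirling/Euler--Maclaurin versus the Gaussian sum) produces $c_0=-\tfrac12\log\bigl(z+a(1-z)\bigr)$, all of which checks out. The paper gives no proof of this proposition at all, citing McIntosh and Zagier instead, and your Euler--Maclaurin plus Laplace argument is essentially the standard proof found in those references, with the error estimates you honestly flag (tail bounds away from the saddle, uniformity at the $u\to 0$ endpoint) being the only remaining work to make it fully rigorous.
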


It is a classical fact going back to Lambert that the trinomial
equation~\eqref{trinomial-eqn} can be solved in terms of
hypergeometric functions. Concretely, we can expand $z$ as a power
series in $T$ and obtain
\begin{equation}
\label{landen}
z=z(T)=\sum_{n\geq 0} \frac1{(a-1)n+1}\binom{an}n\,(-T)^n.
\end{equation}
This result follows easily from Lagrange's formula~\cite[p. 133]{WW}
(see also~\cite[p. 43]{Polya}).
For example, if $a=2$ then
$$
z=\frac{-1+\sqrt{1+4T}}{2T} = 1 - T + 2T^2 - 5T^3 + 14T^4 - 42T^5 +
132T^6  +\cdots,
$$
where the coefficients are, up to a sign, the Catalan numbers. 

We would now like to combine~\eqref{landen} with
Proposition~\ref{asymptotics}. To find an expression for $c_{-1}$ as a
power series in $T$ we differentiate~\eqref{trinomial-eqn} with
respect to $T$ to find
$$
\frac{\partial
  z}{\partial T}=-\frac{z^a}{1+az^{a-1}T}=-\frac{(1-z)z}{T(z+a(1-z))}.
$$
On the other hand
$$
\frac{\partial
  c_{-1}}{\partial T}=\log z\left(\frac1{1-z}+\frac az\right)\frac{\partial
  z}{\partial T}
$$
and these combine to give
$$
T\frac{\partial
c_{-1}}{\partial T}=-\log z
$$
As it happens, $\log z$ also has an explicit hypergeometric power
series expansion~\cite[p. 134, Ex. 3]{WW}. In fact, all powers $z^s$
have such an expansion, namely
$$
z^s=1+s\sum_{n\geq 1} \binom{s-1+an}n\frac{(-T)^n}{(a-1)n+s},
$$
from which we obtain by differentiation with respect to $s$ and
setting $s=0$
$$
\log z= \sum_{n\geq 1}\frac1a\binom{an}n\,\frac{(-T)^n}n.
$$ 
It follows that
$$
c_{-1}=\sum_{n\geq 1}(-1)^{n-1}\frac1a\binom{an}n\,\frac{(-T)^n}{n^2}.
$$
and therefore
$$
A_{(1^n)}(1)=\frac1{n^2}\sum_{d\mid n}\mu\left(\frac
  nd\right)\frac1\rho \binom{\rho d +d-1}d, \qquad \qquad \rho=2g-2.
$$
\begin{remark}
\label{polya-remark}
  As observed by Polya~\cite[p. 44]{Polya} if we set $z=1+w/a,T=-U/a$
  and let $a$ go to infinity then~\eqref{trinomial-eqn} becomes
$$
Ue^w=w
$$
and $z(T)$ becomes $w(U)=\sum_{n\geq1} n^{n-1}\,U^n/n!$, the
exponential generating function for labelled rooted trees.
\end{remark}

\section{General case}
\label{general}
\subsection{Lagrange's inversion formula}
Consider
$$
\left.(q-1)\log\left[\Hua_N(x;q)\right]\right|_{q=1}
$$
The asymptotic expansion now involves $N$ saddle points
$z_1,\ldots,z_N$, which are solutions to the system of equations
\begin{equation}
\label{saddle-pts-eqn}
1-z_i=x_i\prod_{j=1}^Nz_j^{a_{i,j}}, \qquad \qquad i=1,2,\ldots, N,
\end{equation}
where $a_{i,j}=-\rho \min(i,j)$. These equations determine
$z_i=z_i(x)$ implicitly.  

We will carry on the discussion of solving~\eqref{saddle-pts-eqn} for
an generic symmetric matrix $A=(a_{i,j})\in \Z^{N\times N}$ as far as
we can before specializing to our situation. As in the one variable
case of the previous section we can expand $z_i$ as power series in
the $x_j$'s. We obtain expressions for the corresponding coefficients
by applying a multi-variable version of Lagrange's formula due to
Stieltjes (see~\cite{debruijn} and~\cite{sack} for other
multi-variable versions and some history on the matter). Note that
$z_i(0)=1$.

\begin{remark}
  The kind of analysis done in this section appears prominently in the
  physics literature under the heading of $Q$-systems, originating
  from the work of Kirillov--Reshetikhin on representation theory and
  the combinatorics of the Bethe Ansatz. There is a substantial
  literature on the subject. The basic application of Lagrange's
  inversion can be found for example in~\cite{Kuniba-et-al}; see also
  Nahm's paper~\cite{Nahm} already mentioned. We preferred to rederive
  the results we needed from scratch.
\end{remark}
\begin{theorem}
\label{lagrange}
Let $z_1,\ldots, z_N$ be implicitly given by
\begin{equation}
\label{z-implicit}
z_i=y_i+x_i\,f_i(z_1,\ldots,z_N), \qquad \qquad i=1,2,\ldots,N,
\end{equation}
where $f_1,\ldots,f_N$ are analytic. Then for $g$ analytic we have
$$
g(\z)=\frac1{D}\sum_{m}a_m(y)\,
\frac{x^m} {m!},
$$
where $m=(m_1,\ldots,m_N),x=(x_1,\ldots,x_N)$, etc.,
$x^m:=x_1^{m_1}\cdots x_N^{m_N}$, $m!:=m_1!\cdots m_N!$,
$$
a_m(y):=
\frac{\partial^{m_1}}{\partial y_1^{m_1}}
\cdots \frac{\partial^{m_N}}{\partial y_N^{m_N}}
\left[g(y)f_1(y)^{m_1}\cdots 
f_N(y)^{m_N} \right],
$$
and
$$
D:=\det\left(\frac{\partial z_i}{\partial y_j}\right)
$$
\end{theorem}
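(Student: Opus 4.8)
The plan is to prove the equivalent assertion that the Taylor coefficients in $x$ of the function $F:=D\,g(\z)$ are exactly $a_m(y)/m!$. Since \eqref{z-implicit} has a unique analytic solution $\z=\z(x,y)$ with $\z(0,y)=y$ near $x=0$, and since $D|_{x=0}=1$, this amounts to showing
$$
\frac{\partial^{m_1}}{\partial x_1^{m_1}}\cdots\frac{\partial^{m_N}}{\partial x_N^{m_N}}F\Big|_{x=0}=a_m(y)
$$
for every multi-index $m$. The engine of the whole argument is a single commutation identity, a ``transfer lemma'': for every index $k$ and every analytic $h$,
\begin{equation}
\frac{\partial}{\partial x_k}\bigl(D\,h(\z)\bigr)=\frac{\partial}{\partial y_k}\bigl(D\,f_k(\z)\,h(\z)\bigr).
\end{equation}
Granting this, I would iterate it: each application of $\partial/\partial x_k$ converts an $x$-derivative into a $y$-derivative while inserting one more factor $f_k(\z)$ into the weight, so that $\partial_x^m(D\,g(\z))=\partial_y^m\bigl(D\,g(\z)\prod_i f_i(\z)^{m_i}\bigr)$ (mixed partials commute, so the order is irrelevant). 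Setting $x=0$ and using $D|_{x=0}=1$, $\z|_{x=0}=y$ collapses the right-hand side to $\partial_y^m\bigl[g(y)\prod_i f_i(y)^{m_i}\bigr]=a_m(y)$, which is the theorem.

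The heart of the matter is therefore the transfer lemma, and I expect this to be the main obstacle. I would first extract a \emph{characteristic relation} by implicitly differentiating \eqref{z-implicit}. Writing $J=(\partial z_i/\partial y_j)$, $X=\diag(x_1,\ldots,x_N)$ and $F_{\z}=(\partial f_i/\partial z_j)(\z)$, differentiation in $y_j$ gives $J=(I-XF_{\z})^{-1}$, while differentiation in $x_k$ gives $\partial z_i/\partial x_k=f_k(\z)\,J_{ik}=f_k(\z)\,\partial z_i/\partial y_k$ (no sum on $k$). Consequently the first-order operator $\Lambda_k:=\partial/\partial x_k-f_k(\z)\,\partial/\partial y_k$ annihilates every function of $\z$ alone, by the chain rule. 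Since $D=\det J=\det(I-XF_{\z})^{-1}$, Jacobi's formula yields $\Lambda_k\log D=\Tr\bigl(J\,\Lambda_k(XF_{\z})\bigr)$; here $\Lambda_k F_{\z}=0$ because $F_{\z}$ is a function of $\z$, while $\Lambda_k X=E_{kk}$, so the trace collapses to $\sum_i J_{ik}\,\partial f_k/\partial z_i=\partial_{y_k}\bigl(f_k(\z)\bigr)$. As $\Lambda_k$ is a derivation, this gives the clean scalar identity $\Lambda_k D=D\,\partial_{y_k}(f_k(\z))$. A short Leibniz-rule computation, using $\Lambda_k h(\z)=0$, then shows that the transfer identity above is \emph{equivalent} to precisely this scalar relation, so the lemma is established.

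Finally, I would confirm that all the manipulations are legitimate by working throughout in a small polydisc about $x=0$, where $\z(x,y)$ is analytic and $F=D\,g(\z)$ is a genuine convergent power series in $x$, so that the Taylor identification of coefficients is valid. An alternative route would be a multivariate Cauchy residue computation, extracting $[x^m]\{D\,g(\z)\}$ via the change of variables $y\mapsto\z$, whose Jacobian is exactly $D$; this is essentially Good's derivation of the Lagrange--Stieltjes formula. I prefer the transfer-lemma argument because it concentrates the entire difficulty in the one differential identity $\Lambda_k D=D\,\partial_{y_k}(f_k(\z))$ and renders the bookkeeping for general $m$ completely mechanical.
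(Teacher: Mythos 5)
Your proposal is correct, and it is worth noting that the paper itself contains \emph{no} proof of Theorem~\ref{lagrange}: the result is invoked as a classical multivariate inversion formula due to Stieltjes, with the reader pointed to the literature (de Bruijn's ``Lagrange--Good inversion formula'' and Sack) for proofs and history, so your self-contained argument is a genuine addition rather than a variant of something in the text. The three steps of your argument all check out: implicit differentiation of \eqref{z-implicit} gives $(I-XF_{\z})J=I$ and $(I-XF_{\z})\,(\partial z_i/\partial x_k)_i=f_k(\z)e_k$, whence $\partial z_i/\partial x_k=f_k(\z)\,\partial z_i/\partial y_k$ and the operator $\Lambda_k=\partial_{x_k}-f_k(\z)\,\partial_{y_k}$ kills every function of $\z$; Jacobi's formula applied to $\log D=-\log\det(I-XF_{\z})$ correctly yields $\Lambda_k D=D\,\partial_{y_k}\bigl(f_k(\z)\bigr)$, since $\Lambda_k F_{\z}=0$ and $\Lambda_k X=E_{kk}$; and the Leibniz computation showing this scalar identity is equivalent to the transfer identity $\partial_{x_k}\bigl(D\,h(\z)\bigr)=\partial_{y_k}\bigl(D\,f_k(\z)h(\z)\bigr)$ is right, after which induction, commutation of mixed partials, and Taylor expansion at $x=0$ (where $D=1$, $\z=y$) finish the proof. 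The only overlap with the paper is the initial implicit differentiation, which appears there as \eqref{partial-z-fmla}--\eqref{delta-fmla-1} but is used only to compute $D^{-1}$, not to prove the theorem. Your alternative route via a multivariate residue/change of variables $y\mapsto\z$ with Jacobian $D$ is indeed Good's derivation, i.e.\ essentially the proof in the cited reference \cite{debruijn}; the transfer-lemma route you chose has the advantage of being purely differential and of making the appearance of the prefactor $1/D$ transparent, at the modest cost of requiring the identity $\Lambda_k D=D\,\partial_{y_k}(f_k(\z))$, which you establish cleanly.
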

By differentiating~\eqref{z-implicit} with respect to the $y$'s we see
that 
\begin{equation}
\label{partial-z-fmla}
\left(I_N-\left(x_i\frac{\partial f_i}{\partial z_j}\right)\right) 
\left(\frac{\partial z_i}{\partial y_j} \right)= I_N,
\end{equation}
where $I_N$ is the $N\times N$ identity matrix and hence
\begin{equation}
\label{delta-fmla-1}
D^{-1}=\det \left(I_N-\left(x_i\frac{\partial f_i}{\partial
      z_j}\right)\right) 
\end{equation}

We apply this theorem to $g(z_1,\ldots,z_N)=z_1^{s_1}\cdots z_N^{s_N}$
and $f_i(z_1,\ldots,z_N)=-\prod_{j=1}^Nz_j^{a_{i,j}}$. Note that the
Jacobian matrix $\partial z_i/\partial x_j$ at $x=0$ is the identity
matrix. Setting $y_i=1$ we obtain
\begin{equation}
\label{z-monom-expansion}
  z_1^{s_1}\cdots z_N^{s_N}=\frac1D \sum_m
  (-1)^{m_1+\cdots m_N}\prod_{i=1}^N
  \binom{s_i+P_i(m)}{m_i}\,x^m,
\end{equation}
where $P_i(m):=\sum_{j=1}^N a_{j,i}\,m_j$ and $z_1,\ldots,z_N$
satisfy~\eqref{saddle-pts-eqn}.  In particular, taking
$s_1=\cdots=s_N=0$ we find
\begin{equation}
\label{delta-fmla}
D= \sum_m
  (-1)^{m_1+\cdots m_N}\prod_{i=1}^N
  \binom{P_i(m)}{m_i}\,x^m.
\end{equation}
\begin{remark}
  Note that~\eqref{delta-fmla} implies that the power series on the
  right hand side is an algebraic function of $x_1,\ldots,x_N$ for any
  choice $a_{i,j}\in \Q$. This generalizes an observation of Hurwitz
  in the one variable case (see~\cite[footnote 1, p. 43]{Polya}).
\end{remark}

By~\eqref{delta-fmla-1} 
\begin{equation}
\label{delta-fmla-2}
D^{-1}=\det\left(I_N-FAZ^{-1}\right),
\end{equation}
where $F$ and $Z$ are the diagonal matrices
$$
F:=\diag(x_1f_1,\ldots,x_Nf_N), \qquad \qquad
Z:=\diag(z_1,\ldots,z_N).
$$
In particular, $D^{-1}$ is a Laurent polynomial in the $z_j's$.

For example, in the dimension $N=1$ case of~\S\ref{order-1}, with
$z=z(T)=1+O(T)$ a solution to $1-z=Tz^a$ for a generic $a\in \Z$, we find
$D^{-1}=1+xaz^{a-1}$ and hence
$$
D=\frac{z}{1+a(1-z)}=\sum_{n\geq 0}\binom{an}n\,(-T)^n.
$$
\begin{lemma}
\label{minors}
Let $A=(a_{i,j})$ be an $N\times N$ matrix and $x_1,\ldots,
x_N$ independent variables. Then
$$
\det\left(\diag(x_1,\ldots,x_N)-A\right)=\sum_I (-1)^{N-\#
  I}D_I\,x^I,
$$
where $I$ runs over all subsets of $\{1,2,\ldots,N\}$, $x^
I:=\prod_{i\in I} x_i$ and $D_I$ is the  
determinant of the matrix obtained from $A$ by striking the rows and
columns whose indices are in $I$.
\end{lemma}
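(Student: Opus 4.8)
The plan is to expand the determinant $\det(\diag(x_1,\ldots,x_N)-A)$ directly and collect terms according to which of the diagonal variables $x_i$ appear. Write $M:=\diag(x_1,\ldots,x_N)-A$, so that the $(i,j)$ entry of $M$ is $x_i\delta_{i,j}-a_{i,j}$. The key observation is that $M$ is \emph{linear} in each variable $x_i$ separately, and $x_i$ occurs only in the single diagonal entry $M_{i,i}$. Hence the determinant, viewed as a polynomial in $x_1,\ldots,x_N$, is multilinear: every monomial appearing is of the form $x^I=\prod_{i\in I}x_i$ for some subset $I\subseteq\{1,\ldots,N\}$, with no higher powers. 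The entire content of the lemma is therefore to identify the coefficient of each $x^I$.

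The cleanest way I would carry this out is via multilinearity of the determinant in its rows (or via repeated differentiation). For a fixed subset $I$, the coefficient of $x^I$ is obtained by keeping the $x_i$ term (namely $x_i$ itself) in the diagonal entry of row $i$ for each $i\in I$, and keeping the constant part of row $i$ (namely the $i$-th row of $-A$) for each $i\notin I$. Concretely, split each row of $M$ as a sum of two rows, $x_i e_i$ and $-(\text{$i$-th row of }A)$, where $e_i$ is the $i$-th standard basis vector; expanding the determinant by multilinearity over all $2^N$ choices, the choice producing the monomial $x^I$ selects $x_i e_i$ in rows $i\in I$ and the row of $-A$ in rows $i\notin I$. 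Factoring out $\prod_{i\in I}x_i$ leaves the determinant of the matrix whose rows indexed by $I$ are the basis vectors $e_i$ and whose remaining rows come from $-A$. Expanding this determinant along each basis-vector row $e_i$ (Laplace expansion) deletes column $i$ with no sign cost beyond that of the standard cofactor signs, which I would track to produce the factor $(-1)^{N-\#I}$.

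The one step requiring care — and the only genuine obstacle — is the sign bookkeeping. After stripping the $x^I$ factor one is left with $\det$ of a matrix that, up to simultaneous row and column permutations collecting the $I$-indices into a block, is $\det\begin{pmatrix} I_{\#I} & 0 \\ * & -A_{I^c} \end{pmatrix}$, where $A_{I^c}$ is the submatrix of $A$ on rows and columns outside $I$. The block-triangular structure gives the value $\det(-A_{I^c})=(-1)^{N-\#I}D_I$, and one must check that the permutation realigning rows and columns into this block form contributes the \emph{same} sign to both, hence no net sign. This yields exactly $(-1)^{N-\#I}D_I$ as the coefficient of $x^I$, matching the claimed formula. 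Summing over all $I$ completes the proof. I expect this sign verification to be entirely routine but the only place where a careless computation could slip.
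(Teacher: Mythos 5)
Your proof is correct. Note that the paper itself states Lemma~\ref{minors} without any proof (it is a preliminary note that only sketches arguments), so there is no argument of the paper to compare against; your row-multilinearity expansion is the standard proof one would supply. The two points you flag as needing care are indeed fine as you handle them: expanding over the $2^N$ ways of choosing either $x_i e_i$ or the $i$-th row of $-A$ in each row isolates the coefficient of $x^I$, and the sign works out because a \emph{simultaneous} permutation of rows and columns (bringing the indices of $I$ to the front) multiplies the determinant by $\det(P)\det(P^{T})=1$, after which the block-triangular form $\left(\begin{smallmatrix} I_{\#I} & 0 \\ * & -A_{\bar I}\end{smallmatrix}\right)$ gives $\det(-A_{\bar I})=(-1)^{N-\#I}D_I$. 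A quick sanity check at $N=1,2$ confirms the signs.
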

Combining the lemma with~\eqref{delta-fmla-2} we get
\begin{equation}
\label{delta-fmla-3}
D^{-1}=\sum_I (-1)^{\#I}D_{\bar I}\,\prod_{i\in I} \frac{x_if_i}{z_i},
\end{equation}
where $\bar I$ is the complement of $I$.

We would like an explicit form for the series expansion of
$z_1^{s_1}\cdots z_N^{s_N}$.  To do this
combine~\eqref{z-monom-expansion}, which already yields
$z_1^{s_1}\cdots z_N^{s_N}$ as a ratio of two power series,
with~\eqref{delta-fmla-3} to obtain
\begin{eqnarray*}
  z_1^{s_1}\cdots z_N^{s_N}
&=&
\sum_I D_{\bar I} \sum_m
  (-1)^{m_1+\cdots m_N}
\prod_{j=1}^N
  \binom{s_j+\sum_{k=1}^N a_{k,j}\,m_k+\sum_{i\in
      I}(a_{i,j}-\delta_{i,j})} {m_j}\,x^Ix^m\\
 &=&  
\sum_m  (-1)^{m_1+\cdots m_N} \sum_I (-1)^{\# I} D_{\bar I} 
\prod_{i=1}^N
  \binom{s_i+P_i(m)-\delta_{I,i}}
  {m_i-\delta_{I,i}}\,x^m\\
&=&
\sum_m  (-1)^{m_1+\cdots m_N} 
\prod_{i=1}^N
  \binom{s_i+P_i(m)} {m_i}\, R(m;s)
\,x^m,
\end{eqnarray*}
where $\delta_{I,i}:=1$ if $i\in I$ and is $0$ otherwise and
$$
R(m;s):=\sum_I 
(-1)^{\# I} D_{\bar I} \prod_{i\in I}\frac{m_i}{s_i+P_i(m)}.
$$
Note that it follows from the above identity that $R(m;0)=0$ for $m$
non-zero. We can write $R$ back as a determinant using
Lemma~\ref{minors}.  By the generalized matrix-tree theorem we can
interpret this determinant in terms of weighted spanning trees on $N$
labelled vertices. For a generic symmetric matrix $A$ we have
\begin{equation}
\label{partR}
\left.\frac{\partial}{\partial s_i}R(m;s)\right|_{s=0} =
\frac{m_i\tau(m)}{\prod_{i=1}^NP_i(m)},
\end{equation}
where 
$$
\tau(m):=\sum_{T}\prod_{i=1}^N m_i^{d_i-1}\prod_e a_{i,j}.
$$
Here the second product runs over all edges
$i\stackrel{e}{\rightarrow} j$ of the tree $T$ and $d_i$ is the number
of edges connecting to the vertex $i$ in $T$.

It turns out that $\tau(m)$ has a particularly simple form when
$A=\calH_N$; this is not true in  general.
\begin{lemma}
\label{tau}
For $A=\calH_N$ we have
$$
\tau(m)=P_2(m)\cdots P_{N-1}(m).
$$
\end{lemma}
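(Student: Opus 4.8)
The plan is to turn the weighted tree sum $\tau(m)$ into a reduced-Laplacian determinant via the matrix--tree theorem, and then to exploit the tridiagonality of $\calH_N^{-1}$. First I would absorb the vertex weights into the edges: in a tree $\sum_i d_i=2(N-1)$, so $\prod_i m_i^{d_i-1}=(\prod_i m_i)^{-1}\prod_{(i,j)\in T}m_im_j$ and hence
$$
\Big(\prod_{i=1}^N m_i\Big)\tau(m)=\sum_{T}\prod_{(i,j)\in T}m_im_j\min(i,j),
$$
which by the matrix--tree theorem is any diagonal cofactor of the weighted Laplacian $L$ whose off-diagonal entries are $-m_im_j\min(i,j)$. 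A direct computation gives $L=\diag(m_iP_i(m))-D\calH_N D$ with $D=\diag(m_1,\dots,m_N)$, i.e. $L=DM$ for
$$
M:=\diag(P_1(m),\dots,P_N(m))-\calH_N D,\qquad M_{ij}=\delta_{ij}P_i(m)-m_j\min(i,j).
$$
Since $D$ is diagonal the deletion of row and column $r$ factors through, and one obtains $\tau(m)=\det M^{(r)}/m_r$ for every $r$.

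To evaluate $\det M^{(r)}$ I would read it off from the spectrum of $M$. From $\sum_i m_i\min(i,j)=P_j(m)$ one sees $M\mathbf{1}=0$ and $m^{\mathsf T}M=0$, so (treating the $m_i$ as indeterminates, where $0$ is a simple eigenvalue) $\operatorname{adj}(M)=\alpha\,\mathbf{1}\,m^{\mathsf T}$ for a scalar $\alpha$; thus $\det M^{(r)}=\alpha\,m_r$ and $\alpha\,P_1(m)=\operatorname{tr}\operatorname{adj}(M)$ equals the product of the nonzero eigenvalues of $M$. Everything then comes down to the characteristic polynomial. Using $\det\calH_N=1$ and setting $C:=\calH_N^{-1}$ (the tridiagonal tadpole matrix) and $P:=\diag(P_i(m))$, I would factor $\det(\lambda I-M)=\det(\lambda C+D-CP)$, the crucial point being that $\lambda C+D-CP$ is tridiagonal. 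Here the second-difference identities $m_i=2P_i-P_{i-1}-P_{i+1}$ (with $P_0:=0$) and $m_N=P_N-P_{N-1}$ are what linearise its diagonal in the $P_i$: the matrix becomes a tridiagonal $\Psi(\lambda)$ with diagonal $2\lambda-P_{i-1}-P_{i+1}$ (last entry $\lambda-P_{N-1}$) and off-diagonal entries $P_{i+1}-\lambda$ and $P_i-\lambda$.

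The target, which I expect to be the main obstacle, is the factorisation
$$
\det(\lambda I-M)=\det\Psi(\lambda)=\lambda\prod_{i=1}^{N-1}\big(\lambda-P_i(m)\big),
$$
and I would prove it by a transfer-matrix induction. Writing $E_k(\lambda)$ for the $k\times k$ leading minor of the all-interior tridiagonal matrix, the cofactor recursion reads $E_k=(2\lambda-P_{k-1}-P_{k+1})E_{k-1}-(P_{k-1}-\lambda)(P_k-\lambda)E_{k-2}$. The device that makes this telescope is the auxiliary combination $H_k:=E_k+(P_{k+1}-\lambda)E_{k-1}$; substituting the recursion collapses it to the first-order relation $H_k=(\lambda-P_{k-1})H_{k-1}$ with $H_1=\lambda$, so $H_k=\lambda\prod_{i=1}^{k-1}(\lambda-P_i)$. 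Expanding $\det\Psi(\lambda)$ along its boundary last row writes it as $(\lambda-P_{N-1})H_{N-1}$, which is exactly the claimed product. Reading off the nonzero eigenvalues $P_1,\dots,P_{N-1}$ gives $\alpha=\prod_{i=1}^{N-1}P_i/P_1=\prod_{i=2}^{N-1}P_i$, whence $\tau(m)=\det M^{(r)}/m_r=\alpha=P_2(m)\cdots P_{N-1}(m)$. The only non-routine ingredients are the second-difference simplification that turns $D-CP$ into a matrix with entries purely in the $P_i$, and the discovery of $H_k$, which converts the three-term recursion into a bare product.
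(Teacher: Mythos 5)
Your proof is correct, and in fact the paper offers nothing to compare it against: Lemma~\ref{tau} is stated bare, with no proof even sketched (the note defers all details elsewhere), so your argument fills a genuine gap. I checked the steps. The conversion $\prod_i m_i^{d_i-1}=(\prod_i m_i)^{-1}\prod_{(i,j)\in T}m_im_j$ and the weighted matrix--tree theorem do give $\tau(m)=\det M^{(r)}/m_r$ with $M=\diag(P_1(m),\ldots,P_N(m))-\calH_N D$; the null-vector identities $M\mathbf{1}=0$ and $m^{\mathsf T}M=0$ hold because $\sum_j\min(i,j)m_j=P_i(m)$ and $\min$ is symmetric; and the key factorization $\det(\lambda I-M)=\lambda\prod_{i=1}^{N-1}\bigl(\lambda-P_i(m)\bigr)$ is right --- it agrees with direct computation for $N=2,3$, your telescoping identity $H_k=E_k+(P_{k+1}-\lambda)E_{k-1}=(\lambda-P_{k-1})H_{k-1}$ with $H_1=\lambda$ is algebraically valid, and the boundary-row expansion $\det\Psi=(\lambda-P_{N-1})H_{N-1}$ closes the induction. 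The genericity device (working over $\Q(m_1,\ldots,m_N)$, where the $P_i$ are distinct and nonzero so $0$ is a simple eigenvalue and $\operatorname{adj}(M)=\alpha\,\mathbf{1}m^{\mathsf T}$) is legitimate because the claimed identity is polynomial in the $m_i$; just present the characteristic polynomial computation before invoking simplicity of $0$, so nothing is circular. Your route also explains, better than the paper does, why the lemma is special to $\calH_N$: the passage from $\lambda I-M$ to the tridiagonal matrix $\lambda\calH_N^{-1}+D-\calH_N^{-1}P$, linearized by the second-difference relations $m_i=2P_i-P_{i-1}-P_{i+1}$ and $m_N=P_N-P_{N-1}$, uses exactly the sparse tadpole Cartan matrix structure of $\calH_N^{-1}$ that the paper highlights, consistent with its remark that the simple form of $\tau(m)$ ``is not true in general.'' (One pedantic caveat: your argument, like the lemma's product formula, is for $N\geq 2$; the case $N=1$ is a degenerate convention.)
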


\subsection{Schl\"afli's differential}
For a generic symmetric matrix $A=(a_{i,j})\in \Z^{N\times N}$ define
$$
V(x)= \lim_{q\rightarrow 1}\left\{(q-1)\log\left[\sum_m
\frac{q^{\tfrac12{}^tmAm}}{(q)_m}x^m
\right]\right\}
$$
and let $z_i=z_i(x)$ be the power series solutions to the saddle point
equations~\eqref{saddle-pts-eqn} of the previous section. Then $V$ and
$z_i$ are related as follows.
\begin{proposition}
\label{schlaefli}
We have
$$
dV(x)=\sum_{i=1}^N \log z_i(x) \,\frac{dx_i}{x_i}.
$$
\end{proposition}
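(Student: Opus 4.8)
The plan is to realize $V(x)$ as the critical value of an explicit ``action'' functional produced by the saddle-point (Laplace) analysis of the $q$-series as $q\to 1$, and then to extract the differential from the stationarity condition alone; this variational cancellation is exactly the mechanism behind every Schl\"afli-type formula. First I would set $q=e^{-t}$ with $t\searrow 0$ and rescale the summation index by writing $m_i=\xi_i/t$, so that $q^{m_i}\to e^{-\xi_i}=:z_i$. Using the Euler--Maclaurin estimate $\log (q)_{m_i}^{-1}=\tfrac1t\big(\tfrac{\pi^2}{6}-\Li_2(e^{-\xi_i})\big)+O(1)$ (the multivariate analogue of the one-variable asymptotics of Proposition~\ref{asymptotics}), the logarithm of the summand of $\sum_m q^{\frac12{}^tmAm}/(q)_m\,x^m$ becomes $\tfrac1t\,\Phi(\xi;x)+O(1)$, where
$$
\Phi(\xi;x)=-\tfrac12\sum_{i,j}a_{i,j}\xi_i\xi_j+\sum_{i=1}^N\Big(\tfrac{\pi^2}{6}-\Li_2(e^{-\xi_i})\Big)+\sum_{i=1}^N\xi_i\log x_i .
$$
The Laplace method then gives $\log[\sum_m\cdots]=\tfrac1t\,\Phi(\xi^\ast(x);x)+O(\log t)$ with $\xi^\ast(x)$ the critical point of $\Phi(\cdot;x)$. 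Since $\partial_{\xi_i}\Li_2(e^{-\xi_i})=\log(1-e^{-\xi_i})$, the stationarity equations $\partial_{\xi_i}\Phi=0$ read $1-e^{-\xi_i}=x_i\prod_j(e^{-\xi_j})^{a_{i,j}}$, which under $z_i=e^{-\xi_i}$ are precisely the saddle-point equations~\eqref{saddle-pts-eqn}; hence $\xi_i^\ast=-\log z_i(x)$. Finally $(q-1)=-t+O(t^2)$, so the limit extracts the $t^{-1}$ coefficient and $V(x)=-\Phi(\xi^\ast(x);x)$.

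With $V=-\Phi|_{\xi=\xi^\ast}$ in hand the proposition follows from the envelope theorem. Differentiating the composite,
$$
dV=-\,d\big(\Phi(\xi^\ast(x);x)\big)=-\sum_j\frac{\partial\Phi}{\partial\xi_j}\Big|_{\xi=\xi^\ast}d\xi_j^\ast\;-\;\sum_i\frac{\partial\Phi}{\partial x_i}\Big|_{\xi=\xi^\ast}dx_i ,
$$
and the first sum vanishes because $\xi^\ast$ is critical for $\Phi(\cdot;x)$. As $\Phi$ depends on $x_i$ only through the term $\xi_i\log x_i$, one has $\partial_{x_i}\Phi=\xi_i/x_i$, whence $dV=-\sum_i(\xi_i^\ast/x_i)\,dx_i=\sum_i\log z_i(x)\,\frac{dx_i}{x_i}$, as claimed. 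As a consistency check one may substitute $\xi^\ast$ back into $\Phi$ and apply the reflection identity $\Li_2(z)+\Li_2(1-z)=\tfrac{\pi^2}{6}-\log z\log(1-z)$ to obtain the closed form
$$
V(x)=-\sum_{i=1}^N\Li_2(1-z_i)-\tfrac12\sum_{i,j}a_{i,j}\log z_i\log z_j ,
$$
which for $N=1$ recovers $-c_{-1}$ of Proposition~\ref{asymptotics} and so ties the general case to the computation of~\S\ref{order-1}.

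The main obstacle is the analytic justification of the saddle-point step: one must show that the Euler--Maclaurin approximation to $\log (q)_m^{-1}$ is valid uniformly near the saddle, that the lattice sum over $m$ concentrates at $\xi^\ast$, and that the chosen branch is the one with $z_i(0)=1$ (the positive real root, as in the one-variable case). Genericity of $A$ and nondegeneracy of the Hessian of $\Phi$ in $\xi$ guarantee a convergent Gaussian about the saddle. Crucially, however, only the critical \emph{value} $\Phi(\xi^\ast;x)$ enters $V$, so the Hessian and all subleading data influence only the $t^0$ and higher coefficients and not the differential formula; this separation is exactly what makes the variational (Schl\"afli) argument robust even though the complete asymptotic expansion is delicate.
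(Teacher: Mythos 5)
Your mechanism is correct, and it is essentially the one the paper has in mind: the paper never writes out a proof of Proposition~\ref{schlaefli}, but its model is the $N=1$ computation of \S\ref{order-1}, where the closed form $c_{-1}=\Li_2(1-z)+\tfrac12 a\log^2 z$ of Proposition~\ref{asymptotics} is differentiated against the trinomial equation~\eqref{trinomial-eqn} and the implicit terms cancel to leave $T\,\partial c_{-1}/\partial T=-\log z$, i.e.\ the $N=1$ case of the Schl\"afli formula. You differ in two ways: you derive the asymptotics from scratch by the Laplace method rather than quoting it, and you extract the differential by the envelope theorem rather than by explicit chain-rule cancellation. The second difference is a genuine gain in clarity: stationarity of $\Phi$ in $\xi$ is exactly the cancellation the paper checks by hand when $N=1$; since the only explicit $x$-dependence of $\Phi$ is $\sum_i\xi_i\log x_i$, the formula $dV=\sum_i\log z_i\,dx_i/x_i$ drops out at once; and the dilogarithm expression for $V$ (the content of the remark following the proposition) becomes a corollary rather than an input. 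Your identification of the stationarity equations with~\eqref{saddle-pts-eqn}, and your closed form for $V$, are both correct and consistent with $V=-c_{-1}$ in one variable.

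There is, however, one concrete gap beyond the uniformity issues you flag, and it matters because of the matrix the proposition is actually applied to. For $A=-\rho\,\calH_N$ with $\rho>0$ (and more generally whenever $A$ is not positive definite) the series $\sum_m q^{\frac12{}^tmAm}x^m/(q)_m$ diverges for every $x\neq 0$ when $0<q<1$: your $\Phi$ then has no interior maximum, since its quadratic term $-\tfrac12{}^t\xi A\xi$ is positive definite in $\xi$, so there is no saddle at which the sum can concentrate, and the Laplace step fails as written on the side $q=e^{-t}$, $t\searrow 0$, that you chose. This is precisely why the paper changes $q$ to $q^{-1}$ in \S\ref{order-1} before invoking Proposition~\ref{asymptotics}: that substitution replaces $A$ by $I_N-A$ (positive definite in the case of interest, with effective $a=2g-1>0$ when $N=1$) and $x_i$ by $-q^{1/2}x_i$, landing in the convergent ``standard format''. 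So you must either (i) perform this substitution and rerun your argument there, or (ii) prove the identity for positive definite $A$ and small $x_i>0$, where the real saddle with $z_i(0)=1$ exists, and then transfer it to general $A$ by noting that the $x^m$-coefficients of both $dV$ and $\log z_i$ are rational functions of the entries $a_{i,j}$, so their equality on an open set of matrices forces it identically; in particular, the definition of $V$ and the proposition itself must then be read coefficientwise, as identities of formal power series in $x$. Either repair is routine, but without one of them your proof is being applied to a divergent series in the only case the paper needs.
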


\begin{remark}
  We call $dV$ Schl\"afli's differential because of its analogy to the
  situation in hyperbolic geometry (see~\cite{Milnor}). There $V$ is
  the volume of a polyhedron in hyperbolic $3$-space, $\log z_i$
  correspond to the length of an edge and $dx_i/x_i$ to $d\theta_i$,
  where $\theta_i$ is the dihedral angle of the polytope at that
  edge. Although we will not emphasize this aspect, $V$ can be
  expressed as a sum of dilogarithms of the $z_i$'s. This is what we
  see in Proposition~\ref{asymptotics}.
\end{remark}

\subsection{Proof of Theorem~\ref{main}}
We return to the case of interest, where
$A=-\rho\,\calH_N$.  It follows from Proposition~\ref{schlaefli} that 
\begin{equation}
\label{z-prod}
z_i=\prod_m(1-x^m)^{m_i\,A_m(1)}
\end{equation}

Given a non-zero $m$ chose $i$ such that $m_i\neq
0$. Formula~\eqref{main-fmla} now follows from~\eqref{partR} and
Lemma~\ref{tau} by taking the logarithm of~\eqref{z-prod}. The
M\"obius inversion in~\eqref{main-fmla} accounts for the difference
between $\log$ and $\Log$.

\end{document}